\documentclass[11pt]{amsart}

\title[A probabilistic heuristic\ldots]{A probabilistic heuristic for counting components of functional graphs of polynomials over finite fields}
\author{Elisa Bellah}
\address{
Department of Mathematics, University of Oregon}
\email{\href{mailto:ebellah@uoregon.edu}{ebellah@uoregon.edu}}
\author{Derek Garton}
\author{Erin Tannenbaum}
\author{Noah Walton}
\address{Fariborz Maseeh Department of Mathematics and Statistics, Portland State University}
\email{\href{mailto:gartondw@pdx.edu}{gartondw@pdx.edu},
\href{mailto:ejt3@pdx.edu}{ejt3@pdx.edu},
\href{mailto:nwalton@pdx.edu}{nwalton@pdx.edu}}
\date{\today}

\usepackage{amssymb,
url,
mathtools,
siunitx}

\usepackage[breaklinks=true]{hyperref}
\hypersetup{colorlinks=true,
urlcolor=blue,
linkcolor=cyan
}

\usepackage{graphicx}

\usepackage[top=1in,bottom=1in,left=1in,right=1in]{geometry}

\newcommand{\Z}{\ensuremath{{\mathbb{Z}}}}

\renewcommand{\P}{\ensuremath{{\mathbb{P}}}}

\newcommand{\F}{\ensuremath{{\mathbb{F}}}}

\newcommand{\lv}{\ensuremath{\left\vert}}
\newcommand{\rv}{\ensuremath{\right\vert}}
\newcommand{\lp}{\ensuremath{\left(}}
\newcommand{\rp}{\ensuremath{\right)}}
\newcommand{\lb}{\ensuremath{\left\{}}
\newcommand{\rb}{\ensuremath{\right\}}}
\newcommand{\lc}{\ensuremath{\left[}}
\newcommand{\rc}{\ensuremath{\right]}}
\newcommand{\xqk}{\ensuremath{X_{q,k}}}
\newcommand{\yqdk}{\ensuremath{Y_{q, d, k}}}
\newcommand{\cqk}{\ensuremath{\mathfrak{C}(q, k)}}
\newcommand{\tqk}{\ensuremath{\mathfrak{T}(q,k)}}

\DeclareMathOperator*{\E}{\mathbb{E}}

\theoremstyle{remark}
\newtheorem{bigquestion}{Question}[section]
\newtheorem{obs}[bigquestion]{Observation}
\newtheorem{cqksize}[bigquestion]{Remark}
\newtheorem{xexpected}[bigquestion]{Remark}
\newtheorem{wrong}[bigquestion]{Remark}
\newtheorem{admission}[bigquestion]{Remark}
\newtheorem{constantsize}[bigquestion]{Remark}


\theoremstyle{plain}
\newtheorem{yexpected}[bigquestion]{Proposition}
\newtheorem{heuristic}[bigquestion]{Heuristic}
\newtheorem{conjecture1}[bigquestion]{Conjecture}
\newtheorem{mainthm1}[bigquestion]{Theorem}
\newtheorem{mainthm2}[bigquestion]{Theorem}

\newtheorem{ratheuristic}[bigquestion]{Heuristic}
\newtheorem{conjecture2}[bigquestion]{Conjecture}

\theoremstyle{definition}

\begin{document}

\maketitle

\begin{abstract}
In 2014, Flynn and Garton~\cite{FG} bounded the average number of components of the functional graphs of polynomials of fixed degree over a finite field.
When the fixed degree was large (relative to the size of the finite field), their lower bound matched Kruskal's asymptotic for random functional graphs.
However, when the fixed degree was small, they were unable to match Krusal's bound, since they could not (Lagrange) interpolate cycles in functional graphs of length greater than the fixed degree.
In our work, we introduce a heuristic for approximating the average number of such cycles of any length.
This heuristic is, roughly, that for sets of edges in a functional graph, the quality of being a cycle and the quality of being interpolable are ``uncorrelated enough''.
We prove that this heuristic implies that the average number of components of the functional graphs of polynomials of fixed degree over a finite field is within a bounded constant of Kruskal's bound.
We also analyze some numerical data comparing implications of this heuristic to some component counts of functional graphs of polynomials over finite fields.
\end{abstract}


\section{Introduction}
\label{intro}

A \emph{\textup{(}discrete\textup{)} dynamical system} is a pair $\left(S,f\right)$ consisting of a set $S$ and a map $f:S\to S$.
Given such a system, an element $s\in S$ is a \emph{periodic point} of the system if there exists some $k\in\Z^{>0}$ such that $(\overbrace{f\circ\cdots\circ f}^{k\text{ times}})(s)=s$; the smallest $k\in\Z^{>0}$ with this property is called the \emph{period} of $s$.
The \emph{functional graph} of such a system, which we denote by $\Gamma(S,f)$, is the directed graph whose vertex set is $S$ and whose edges are given by the relation $s\to t$ if and only if $f(s)=t$.
A \emph{component} of such a graph is a component of the underlying undirected graph.
For any $n\in\Z^{>0}$, let $\mathcal{K}(n)$ denote the average number of components of a random functional graph on a set of size $n$; that is, choose any set $S$ with $\lv S\rv=n$ and let
\[
\mathcal{K}(n)
=n^{-n}
\sum_{f:S\to S}{\lv\lb\text{components of }\Gamma(S,f)\rb\rv}.
\]
Kruskal (see~\cite{K}) proved that
\[
\mathcal{K}(n)
=\frac{1}{2}\log{n}+\left(\frac{\log{2}+C}{2}\right)+o(1),
\]
where $C=.5772\ldots$ is Euler's constant.

Recently, researchers have begun studying the analogous situation for polynomials and rational maps over finite fields.
More precisely, if $q$ is a prime power, define $\Gamma(q,f)=\Gamma\lp\F_q,f\rp$ if $f\in\F_q[x]$ and $\Gamma(q,f)=\Gamma\lp\P^1(\F_q),f\rp$ if $f\in\F_q(x)$.
(If there is no ambiguity, we will frequently write $\Gamma_f$ for $\Gamma(q,f)$.)
Then we can ask the question: for $d\in\Z^{>0}$, what is the average number of components of $\Gamma_f$, for $f$ ranging over all polynomials (or rational maps) over $\F_q$ of a fixed degree?
For example, if we define
\[
\mathcal{P}(q,d)
:=\frac{1}{\lv\lb f\in\F_q[x]\mid\deg{f}=d\rb\rv}\cdot
\sum_{\substack{f\in\F_q[x]\\\deg{f}=d}}
{\lv\lb\text{components of }\Gamma_f\rb\rv},
\]
then we can ask:
\begin{bigquestion}\label{bigquestion}
For a prime power $q$ and $d\in\Z^{>0}$, how does $\mathcal{P}(q,d)$ compare to $\mathcal{K}(q)$?
\end{bigquestion}

In this paper, we recast these questions in probabilistic terms.
Specifically, in \hyperref[families]{Section~\ref*{families}}, we define two families of random variables whose interaction determines the answer to \hyperref[bigquestion]{Question~\ref*{bigquestion}}.
(Briefly, both families random variables have sample space a certain collection of subsets of $\F_q\times\F_q$---one random variable determines if a collection is a cycle, and the other returns how many polynomials of a given degree pass though every point in a collection.)

Our main result, \hyperref[mainthm1]{Theorem~\ref*{mainthm1}}, states that if these two familes of random variables satisfy a certain ``noncorrelation hypothesis'', then
\[
\mathcal{P}(q,d)=\mathcal{K}(q)+O(1).
\]
(See \hyperref[heuristic]{Heuristic~\ref*{heuristic}} for an exact formulation of this hypothesis.)
In \hyperref[families]{Section~\ref*{families}} we define and study these random variables; in particular, we compute their expected values.
Next, in \hyperref[assumption]{Section~\ref*{assumption}} we use the results from \hyperref[families]{Section~\ref*{families}} to prove  the aforementioned \hyperref[mainthm1]{Theorem~\ref*{mainthm1}}.
Then, in \hyperref[evidence]{Section~\ref*{evidence}}, we provide numerical evidence in support of \hyperref[heuristic]{Heuristic~\ref*{heuristic}}.
Finally, these results carry over easily to the analogous question for rational functions; these results make up \hyperref[rationalfunctions]{Section~\ref*{rationalfunctions}}.

Previous work of Flynn and the second author (see~\cite{FG}) provided a partial answer to the question under discussion.
In particular, they proved that if $d\geq\sqrt{q}$, then the average number of components of functional graphs of polynomials (or rational maps) of degree $d$ over $\mathbb{F}_q$ is bounded below by
\[
\frac{1}{2}\log{q}-4
\]
(this is Corollary~2.3 and Theorem~3.6 from~\cite{FG}).

To describe their method, which is the starting point for this paper, we require a definition and an observation.
If a map $f$ has a periodic point $s$ of period $k$, with orbit $s=s_1\xrightarrow{f}\cdots\xrightarrow{f}s_k\xrightarrow{f}s_1$, then we refer to its orbit as a \emph{cycle} (cycles of length $k$ are called \emph{$k$-cycles}).
(See~\cite{VS} for more exposition and illustrations of the cycle structure of functional graphs.)
This definition is especially useful since it allows for the following observation.
\begin{obs} \label{obs}
Components of $\Gamma_f$ are in one-to-one correspondence with the cycles of $f$.
\end{obs}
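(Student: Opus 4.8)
The plan is to exhibit an explicit bijection from cycles to components. Since $S$ is finite and $f\colon S\to S$ is a function, every vertex of $\Gamma_f$ has out-degree exactly one, so for any $s\in S$ the forward orbit $s,f(s),f^2(s),\dots$ must eventually repeat a value by the pigeonhole principle; the first repetition produces a cycle, and because $f$ is deterministic this cycle is uniquely determined by $s$. I would therefore define a map $\psi$ from $S$ to the set of cycles of $f$ by letting $\psi(s)$ be the unique cycle that the forward orbit of $s$ enters. The observation would then follow from showing that the components of $\Gamma_f$ are precisely the nonempty fibers of $\psi$.

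First I would check that $\psi$ is constant on each component. Since components are generated by the adjacency relation, it suffices to verify that adjacent vertices have the same image. If $s$ and $t$ are joined by an edge of the underlying undirected graph, then, after possibly interchanging $s$ and $t$, we have $f(s)=t$, so the forward orbit of $s$ is $s$ followed by the forward orbit of $t$; these two orbits enter the same cycle, and hence $\psi(s)=\psi(t)$. Transitivity along paths then shows that $\psi$ is constant on each component.

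Next I would check that each fiber of $\psi$ lies in a single component. If $\psi(s)=\psi(t)=C$, then the forward orbit of $s$ provides an undirected path from $s$ into $C$, and likewise the forward orbit of $t$ provides an undirected path from $t$ into $C$; concatenating these through $C$ connects $s$ to $t$. Combined with the previous paragraph, this shows that the components of $\Gamma_f$ are exactly the sets $\psi^{-1}(C)$ as $C$ ranges over the cycles of $f$. Finally, each such fiber is nonempty, since it contains $C$ itself (every point of a cycle has forward orbit equal to that cycle), so $C\mapsto\psi^{-1}(C)$ is the desired one-to-one correspondence between cycles and components.

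The argument is elementary, and the only point requiring care is the well-definedness of $\psi$: this rests on the fact that out-degree one forces the forward orbit of each vertex to be a single deterministic trajectory entering exactly one cycle, together with finiteness of $S$ to guarantee that every such trajectory is eventually periodic. I do not anticipate a genuine obstacle here, so I expect the main content of the proof to be simply organizing these observations around the invariant $\psi$.
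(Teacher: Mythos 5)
Your proof is correct. Note that the paper offers no proof of this observation at all---it is stated as an evident consequence of defining cycles---so your writeup simply supplies the standard argument the paper leaves implicit: out-degree one plus finiteness means every forward orbit enters a unique cycle, the resulting assignment $\psi$ is constant on components (adjacent vertices share a forward orbit up to a shift), each fiber of $\psi$ is connected (forward orbits give undirected paths into the common cycle, which is itself connected), and each fiber is nonempty since it contains its cycle. All four steps are sound and together they do establish the bijection, so there is no gap; this is exactly the reasoning the authors took as self-evident.
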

\noindent To obtain their results, Flynn and the second author used Lagrange interpolation to interpolate all the cycles of length smaller than the degree of the maps in question.
Since they could not interpolate longer cycles,
\begin{itemize}
\item they obtained only a lower bound for $\mathcal{P}(q,d)$, and
\item their result required that $d$ be at least $\sqrt{q}$.
\end{itemize}
See \hyperref[admission]{Remark~\ref*{admission}} for a discussion on the relationship between the results of this paper and the results of~\cite{FG}; for example, they proved that the random variables mentioned above are indeed uncorrelated in certain cases.

The cycle structure of functional graphs of polynomials over finite fields has been studied extensively in certain cases.
Vasiga and Shallit~\cite{VS} studied the cycle structure of $\Gamma_f$ for the cases $f=x^2$ and $f=x^2-2$, as did Rogers~\cite{MR1368298} for $f=x^2$.
For any $m\in\Z^{>0}$, the squaring function is also defined over $\Z/m\Z$; Carlip and Mincheva~\cite{MR2402530} addressed this situation for certain $m$.
Similarly, Chou and Shparlinski~\cite{MR2072394} studied the cycle structure of repeated exponentiation over finite fields of prime size.
In the context of Pollard's rho algorithm for factoring integers (see~\cite{MR0392798}), researchers have provided copious data and heuristic arguments supporting the claim that quadratic polynomials produce as many ``collisions'' as random functions, but very little has been proven (see~\cite{MR0392798} and~\cite{MR1094034}).
For many other aspects of functional graphs besides their cycle structure, see~\cite{FO} for a study of about twenty characteristic parameters of random mappings in various settings.

More recently, Burnette and Schmutz~\cite{BS} used the probabilistic point of view to study a similar question to the one we address here.
If $f$ is a polynomial (or rational function) over $\F_q$, define the \emph{ultimate period of $f$} to be the least common multiple of the cycle lengths of $\Gamma_f$.
They found a lower bound for the average ultimate period of polynomials (and rational functions) of fixed degree, whenever the degree of the maps in question, and the size of the finite field, were large enough.

\section{Two families of random variables}
\label{families}

In this section, we define two families of random variables and compute their expected values.
The interaction of these random variables determines the answer to \hyperref[bigquestion]{Question~\ref*{bigquestion}}; see \hyperref[wrong]{Remark~\ref*{wrong}} and the remarks that follow for details about this interaction.
For the remainder of the section, fix a prime power $q$ and positive integer $d$.
Now, for any set $S$ and $C\subseteq S \times S$, we say that $C$ is \emph{consistent} if and only if it has the following property: if $(a,b),(a,c)\in C$, then $b=c$.
Next, for any $k\in\Z^{\geq0}$, define
\[
\cqk
=\lb C\subseteq\F_q\times\F_q
\mid C\text{ is consistent and }\lv C\rv=k\rb.
\]
Any element of $C\in\cqk$ defines a directed graph with vertex set $\F_q$ and edge set $\lb s\to t\mid(s,t)\in C\rb$; let $X_{q,k}:\cqk\to\lb0,1\rb$ be the binary random variable that detects whether or not an element of $\cqk$ defines a graph that happens to be a $k$-cycle.
If $f\in\F_q[x]$ and $C\in\cqk$, we say that $f$ \emph{satisfies} $C$ if $f(a)=b$ for all $(a,b)\in C$.
Next, we let $Y_{q,d,k}:\cqk\to\Z^{\geq0}$ be the random variable defined by
\[
Y_{q,d,k}(C)=\lv\lb f\in\F_q[x]\mid\deg{f}=d\text{ and }f\text{ satisfies }C\rb\rv.
\]

Before computing the expected values of $X_{q,k}$ and $Y_{q,d,k}$, we first mention the size of their sample space.

\begin{cqksize}\label{cqksize}
If $k\in\Z^{>0}$, then
\[
\lv\cqk\rv=q^k \binom{q}{k}.
\]
\end{cqksize}

\begin{proof}
Since the elements of $\cqk$ are consistent, there are $\binom{q}{k}$ possible choices for the sets of abscissas for any choice of ordinates.
Since the ordinates of elements of $\cqk$ are unrestricted, we conclude that $\lv\cqk\rv= \binom{q}{k}q^k$.
\end{proof}

\begin{xexpected}\label{xexpected}
If $k\in\lb1,\ldots,q\rb$, then
\[
\E[\xqk]
=\frac{q(q-1) \cdots (q-(k-1))}{k\lv\cqk\rv}
=\frac{(k-1)!}{q^k}.
\]
\end{xexpected} 
\begin{proof} 
Since
\[
\E[\xqk]
=\frac{\lv\lb C\in\cqk\mid C\text{ is a cycle}\rb\rv}{\lv\cqk\rv},
\]
we only need to count the number of elements in $\cqk$ that are cycles. Since there are
\[
\frac{q(q-1)\cdots (q-(k-1))}{k}
\]
cycles of length $k$, we conclude by \hyperref[cqksize]{Remark~\ref*{cqksize}}.
\end{proof} 

\begin{yexpected}\label{yexpected}
If $k\in\lb1,\ldots,q\rb$, then
\[
\E[\yqdk]=q^{d+1-k}-q^d.
\]
\end{yexpected}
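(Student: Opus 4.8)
The plan is to evaluate $\E[\yqdk]$ directly from its definition as an average over $\cqk$, but to compute the numerator by \emph{switching the order of summation} so that we sum over polynomials before summing over consistent sets. Since
\[
\lv\cqk\rv\cdot\E[\yqdk]=\sum_{C\in\cqk}\yqdk(C)=\#\lb(C,f)\mid C\in\cqk,\ \deg f=d,\ f\text{ satisfies }C\rb,
\]
the right-hand side counts pairs $(C,f)$ with $f$ of degree $d$ satisfying $C$. I would reinterpret it as $\sum_f\#\lb C\in\cqk\mid f\text{ satisfies }C\rb$, where $f$ ranges over all polynomials of degree exactly $d$, and then argue that the inner count is independent of $f$.

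The key observation is exactly that uniformity. If $\deg f=d$, a set $C\in\cqk$ is satisfied by $f$ precisely when $C\subseteq\lb(a,f(a))\mid a\in\F_q\rb$; this graph has $q$ points, and any $k$-subset of it is automatically consistent (the abscissas are distinct, each carrying a single ordinate), so there are exactly $\binom{q}{k}$ such sets $C$, independent of $f$ and of $d$. Since there are $(q-1)q^d$ polynomials of degree exactly $d$ over $\F_q$, the numerator equals $(q-1)q^d\binom{q}{k}$. Dividing by $\lv\cqk\rv=q^k\binom{q}{k}$ from \hyperref[cqksize]{Remark~\ref*{cqksize}}, the binomial factors cancel and I obtain
\[
\E[\yqdk]=\frac{(q-1)q^d}{q^k}=(q-1)q^{d-k}=q^{d+1-k}-q^{d-k}.
\]

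The step that needs the most care—and the reason I would favor this route—is the uniformity of the inner count. One is tempted instead to compute $\yqdk(C)$ for a fixed $C$ by Lagrange interpolation: when $k\le d$ the evaluation map from the $(d+1)$-dimensional space of polynomials of degree $\le d$ to $\F_q^k$ is surjective with $(d+1-k)$-dimensional kernel, so $q^{d+1-k}$ of them satisfy $C$, and subtracting the $q^{d-k}$ of degree $\le d-1$ recovers the same constant $q^{d+1-k}-q^{d-k}$ for every $C$. However, once $k>d$ this value is no longer constant in $C$ (some sets admit no low-degree interpolant), so the per-$C$ approach stalls precisely in the long-cycle regime that motivates the paper. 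Summing over polynomials first sidesteps this, since the count $\binom{q}{k}$ is uniform for all $k\in\lb1,\ldots,q\rb$; the main obstacle is thus conceptual rather than computational—recognizing that averaging absorbs the $C$-dependence that a direct interpolation argument cannot.
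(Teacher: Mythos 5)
Your proof is correct and follows essentially the same route as the paper's: both compute $\sum_{C\in\cqk}\yqdk(C)$ by switching the order of summation, observing that each polynomial of degree $d$ satisfies exactly $\binom{q}{k}$ consistent $k$-sets (namely the $k$-subsets of its graph), and then divide by $\lv\cqk\rv=q^k\binom{q}{k}$. Note also that your final value $q^{d+1-k}-q^{d-k}$ matches the last line of the paper's own proof, which confirms that the term $q^{d}$ in the proposition's displayed formula is a typo for $q^{d-k}$.
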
                   
\begin{proof}
Since
\begin{align*}
\sum_{C\in\cqk}{\yqdk(C)}
&=\sum_{C\in\cqk}
{\lv\lb f \in \F_q[x]
\mid\deg{f}=d\text{ and }f\text{ satisfies }C\rb\rv}\\
&=\sum_{\substack{f \in\F_q[x]
\\\deg{f}=d}}
{\lv\lb C\in\cqk
\mid C\text{ is satisfied by }f\rb\rv}\\
&=\sum_{\substack{f\in\F_q[x]
\\\deg{f}=d}}
{\binom{q}{k}}\\
&=\lp q^{d+1}-q^d\rp\binom{q}{k},
\end{align*}
we see by \hyperref[cqksize]{Remark~\ref*{cqksize}} that
\[
\E[\yqdk]
=\lv\cqk\rv^{-1}\cdot\sum_{C\in\cqk}{\yqdk(C)}
=\frac{\lp q^{d+1}-q^d\rp\binom{q}{k}}{q^k \binom{q}{k}}=q^{d+1-k}-q^{d-k}.
\]
\end{proof}

\begin{wrong}\label{wrong}
If we assume that $X_{q,d},Y_{q,d,k}$ are uncorrelated for all $k\in\lb1,\ldots,q\rb$, then $\mathcal{K}(q)=\mathcal{P}(q,d)$.
\end{wrong}
\begin{proof}
Note that for any $k\in\lb1,\ldots,q\rb$,
\begin{align*}
\sum_{\substack{f \in \F_q[x] \\ \deg{f}=d}}
{\lv\lb k\text{-cycles in } \Gamma_f\rb\rv}
&= \sum_{C \in \cqk} \xqk \yqdk(C)&&\\
&=\lv\cqk\rv\E[\xqk \yqdk]&&\\
&=\lv\cqk\rv\E[\xqk] \E [\yqdk]&&\text{by assumption}.
\end{align*}
Now we can apply Remarks~\ref{cqksize} and~\ref{xexpected}, along with \hyperref[yexpected]{Proposition~\ref*{yexpected}}, to see that
\begin{align*}
\mathcal{P}(q,d)
&=\frac{\lv\cqk\rv}{q^{d+1}-q^d}\cdot\sum_{k=1}^{q}{\E\lc\xqk\rc\E\lc\yqdk\rc}&&\\
&=\sum_{k=1}^q{\frac{q(q-1) \cdots (q-(k-1))}{k q^k}}&&\\
&=\mathcal{K}(q)&&\text{by (16) in \cite{K}}.
\end{align*}
\end{proof}

\begin{admission}\label{admission}
Unfortunately, we must face up to the fact that the random variables $X_{q,d},Y_{q,d,k}$ are not uncorrelated for all $k\in\lb1,\ldots,q\rb$.
Indeed, if they were, then the computations from \hyperref[wrong]{Remark~\ref*{wrong}} would show that
\[
\sum_{\substack{f\in\F_q[x]\\\deg{f}=2}}
{\lv\lb q\text{-cycles in }\Gamma_f\rb\rv}
=\frac{q!(q-1)}{q^{q-2}}.
\]
But, if $q>3$, then the quantity on the left is an integer, and the quantity on the right is not!
In \hyperref[assumption]{Section~\ref*{assumption}}, we propose a heuristic that is more reasonable than that these two random variables are uncorrelated.

On the other hand, we should note that the variables $X_{q,d},Y_{q,d,k}$ are indeed uncorrelated whenever $k\in\lb1,\ldots,d\rb$; this is the content of Lemma~2.1 in~\cite{FG}.
\end{admission}

\section{The heuristic assumption and its implications}
\label{assumption}

As mentioned in \hyperref[admission]{Remark~\ref*{admission}}, the variables $X_{q,d},Y_{q,d,k}$ are not uncorrelated for all $k\in\lb1,\ldots,q\rb$.
In this section, we propose a weaker heuristic for these variables, one which nevertheless implies $\mathcal{P}\lp q,d\rp=\mathcal{K}(q)+O(1)$.

\begin{heuristic}\label{heuristic}
For any $k \in\Z^{>0}$ and any $d\in\Z^{\geq0}$,
\[
\E \lc \xqk \yqdk \rc
=\E \lc \xqk \rc \E \lc \yqdk \rc +O\lp q^{d-2k} \rp.
\]
Here, the implied constant depends only on $d$.
\end{heuristic}

In fact, \hyperref[heuristic]{Heuristic~\ref*{heuristic}} implies more than $\mathcal{P}\lp q,d\rp=\mathcal{K}(q)+O(1)$; we state the stronger implication here as a conjecture after one more definition.
If $k \in\Z^{>0}$ and any $d\in\Z^{\geq0}$, let
\[
\mathcal{P}(q,d,k)
:=\frac{1}{\lv\lb f\in\F_q[x]\mid\deg{f}=d\rb\rv}\cdot
\sum_{\substack{f\in\F_q[x]\\\deg{f}=d}}
{\lv\lb k\text{-cycles in }\Gamma_f\rb\rv}.
\]

\begin{conjecture1}\label{conjecture1}
For any $k \in\Z^{>0}$ and any $d\in\Z^{\geq0}$,
\[
\mathcal{P}(q, d, k)
=\frac{q(q-1) \cdots (q-(k-1))}{kq^k} + O\lp\frac{1}{q}\rp,
\]
where the implied constant depends only on $d$.
In particular, $\mathcal{P}\lp q,d\rp=\mathcal{K}(q)+O(1)$.
\end{conjecture1}

\begin{mainthm1} \label{mainthm1} If \hyperref[heuristic]{Heuristic ~\ref*{heuristic}} is true, then \hyperref[conjecture1]{Conjecture~\ref*{conjecture1}} is true. \end{mainthm1}

\begin{proof}
As in the proof of \hyperref[wrong]{Remark~\ref*{wrong}}, \hyperref[heuristic]{Heuristic ~\ref*{heuristic}} immediately implies that
\[
\sum_{\substack{f \in \F_q[x] \\ \deg{f}=d}}
{\lv\lb k\text{-cycles in } \Gamma_f\rb\rv}
=\lv\cqk\rv\lp \E[\xqk] \E [\yqdk] + O \lp q^{d-2k} \rp \rp.
\]
Next, we can apply Remarks~\ref{cqksize} and~\ref{xexpected}, along with \hyperref[yexpected]{Proposition~\ref*{yexpected}}, to see that
\begin{align*}
\sum_{\substack{f \in \F_q[x] \\ \deg{f}=d}}
{\lv\lb k\text{-cycles in } \Gamma_f\rb\rv}
&=\frac{q(q-1) \cdots (q-(k-1))}{kq^k}\lp q^{d+1}-q^d\rp+\binom{q}{k} q^k\cdot O \lp q^{d-2k} \rp\\
&=\frac{q(q-1) \cdots (q-(k-1))}{kq^k}\lp q^{d+1}-q^d\rp+O\lp q^d\rp.
\end{align*}
To conclude, note that
\begin{align*}
\mathcal{P}(q,d,k)
&=\frac{1}{{q^{d+1}-q^d}}\cdot
\sum_{\substack{f\in\F_q[x]\\\deg{f}=d}}
{\lv\lb k\text{-cycles in }\Gamma_f\rb\rv}\\
&=\frac{q(q-1) \cdots (q-(k-1))}{k q^k}+ O \lp\frac{1}{q}\rp.
\end{align*}
\end{proof}

\begin{constantsize}\label{constantsize}
The available numerical data suggests that the implied constants in \hyperref[heuristic]{Heurisitic~\ref*{heuristic}} could be quite small.
For example, the constant for $d=2$ seems as if it could be as small as 60.
(See \hyperref[evidence]{Section~\ref*{evidence}} for more details on the available data.)
\end{constantsize}

\section{Numerical evidence}
\label{evidence}

In constructing numerical evidence for \hyperref[conjecture1]{Conjecture~\ref*{conjecture1}}, we computed the number of cycles of every length for all polynomials in $\F_q[x]$
\begin{itemize}
\item
of degree 2, up to $q=241$, and 
\item
of degree 3 up to $q=73$.
\end{itemize}
For the remainder of the section, we will address only the quadratic case; a similar analysis works for the cubic case.

Of course, if we let $\mathfrak{Q}=\lb q\in\Z\mid q\text{ is a prime power, and }2\leq q\leq241\rb$, then for any $k\in\lb1,\ldots,241\rb$, there is certainly a constant---let's call it $C_k$---for which
\[
\lv\mathcal{P}\lp q,2,k\rp-\frac{q(q-1)\cdots\lp q-(k-1)\rp}{kq^k}\rv\leq C_k\cdot\frac{1}{q}
\hspace{10px}\text{for all}\hspace{10px}
q\in\mathfrak{Q}.
\]
There are two obvious questions to ask about these constants, which we will address in turn
\begin{itemize}
\item
For any particular $k$, how plausible is it that $\lv\mathcal{P}\lp q,2,k\rp-\frac{q(q-1)\cdots\lp q-(k-1)\rp}{kq^k}\rv\leq C_k\cdot\frac{1}{q}$ for \emph{all} prime powers $q$?
\item
Even if $\mathcal{P}(q,2,k)=\frac{q(q-1)\cdots\lp q-(k-1)\rp}{kq^k}+O\lp\frac{1}{q}\rp$ for all $k\in\Z^{>0}$, does it seem likely that the implied constants are bounded, as asserted by \hyperref[conjecture1]{Conjecture~\ref*{conjecture1}}?
\end{itemize}

To answer the former question, we could plot, for various $k$,
\[
\mathcal{P}(q,2,k)
\hspace{10px}\text{and}\hspace{10px}
\frac{q(q-1)\cdots(q-(k-1))}{kq^k}\pm C_k\cdot\frac{1}{q}.
\]
But, as these numbers quickly become minuscule, it is convenient to let
\[
\widehat{\mathcal{P}}(q,d,k)
=\lv\lb f\in\F_q[x]\mid\deg{f}=d\rb\rv\cdot\mathcal{P}(q,d,k)
=\lp q^{d+1}-q^d\rp\cdot\mathcal{P}(q,d,k);
\]
that is, $\widehat{\mathcal{P}}(q,d,k)$ is the number of $k$-cycles appearing in functional graphs of polynomials in $\F_q[x]$ of degree $d$.
\hyperref[conjecture1]{Conjecture~\ref*{conjecture1}} predicts that this quantity is about
\[
\lp q^{d+1}-q^d\rp\cdot\frac{q(q-1)\cdots(q-(k-1))}{kq^k},
\]
which we will denote by $\mathcal{G}\lp q,d,k\rp$.
By the definition of $C_k$, we know that for all $q\in\mathfrak{Q}$ and $k\in{1,2,\ldots,241}$,
\[
\lv\widehat{\mathcal{P}}(q,2,k)
-\mathcal{G}(q,2,k)\rv
\leq C_k\lp q^2-q\rp.
\]
As two examples of the data we have compiled, we include plots of $\widehat{\mathcal{P}}(q,2,k)$ and $\mathcal{G}(q,2,k)\pm C_k\lp q^2-q\rp$ for $k=6,10$, where $C_6=59$ and $C_{10}=14$.
These graphs are typical for $k\in\lb1,\ldots,241\rb$.

\subsection{Plots of $\widehat{\mathcal{P}}(q,2,k)$ and $\mathcal{G}(q,2,k)\pm C_k\lp q^2-q\rp$ for $k=6,10$}
\label{firstquestion}
~\\
~\\
\includegraphics[width=3in,height=3in,viewport=0 0 270 270]{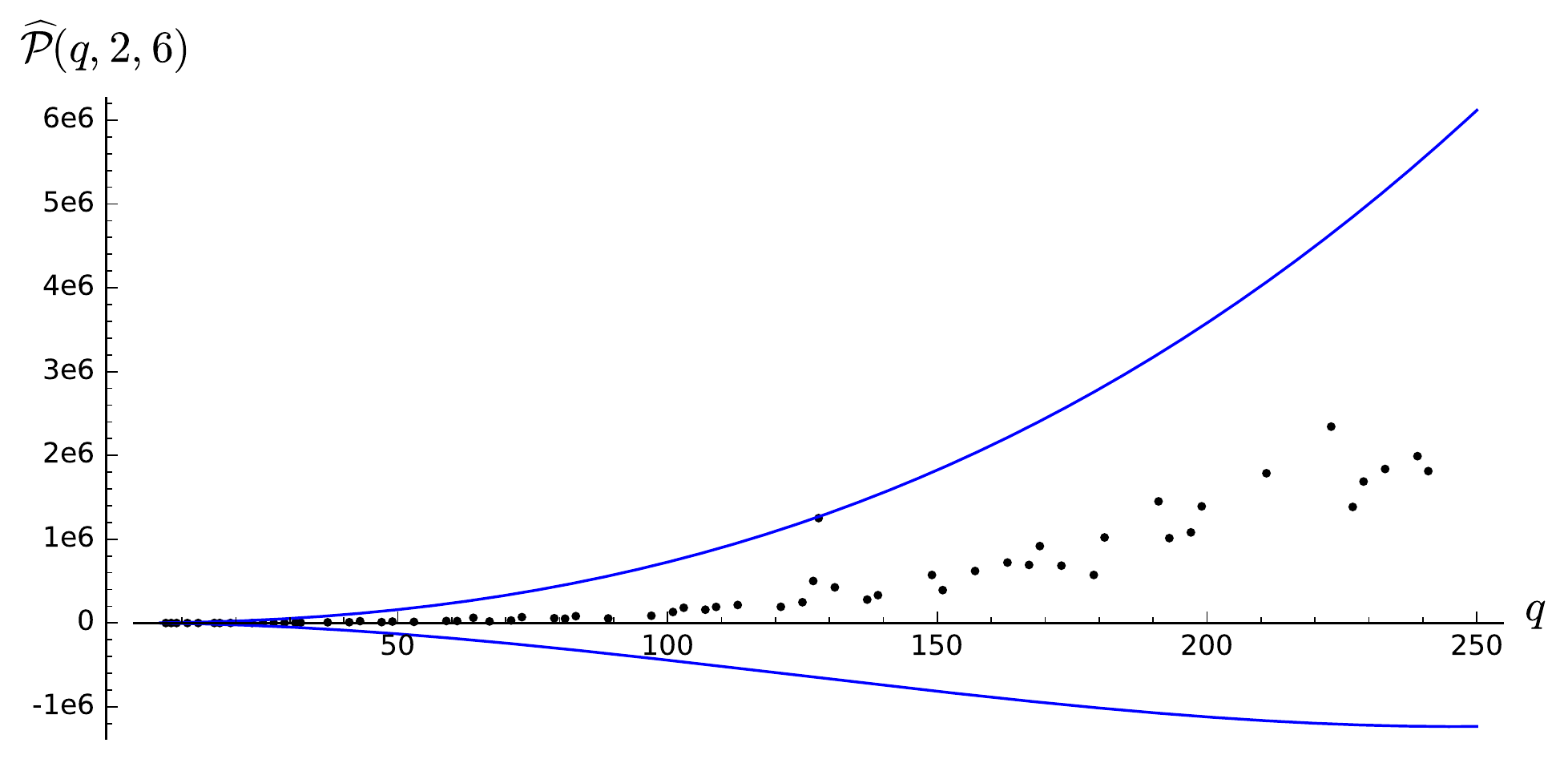}
\vspace{-20px}
\[
C_6\approx59.06
\]

\includegraphics[width=3in,height=3in,viewport=0 0 270 270]{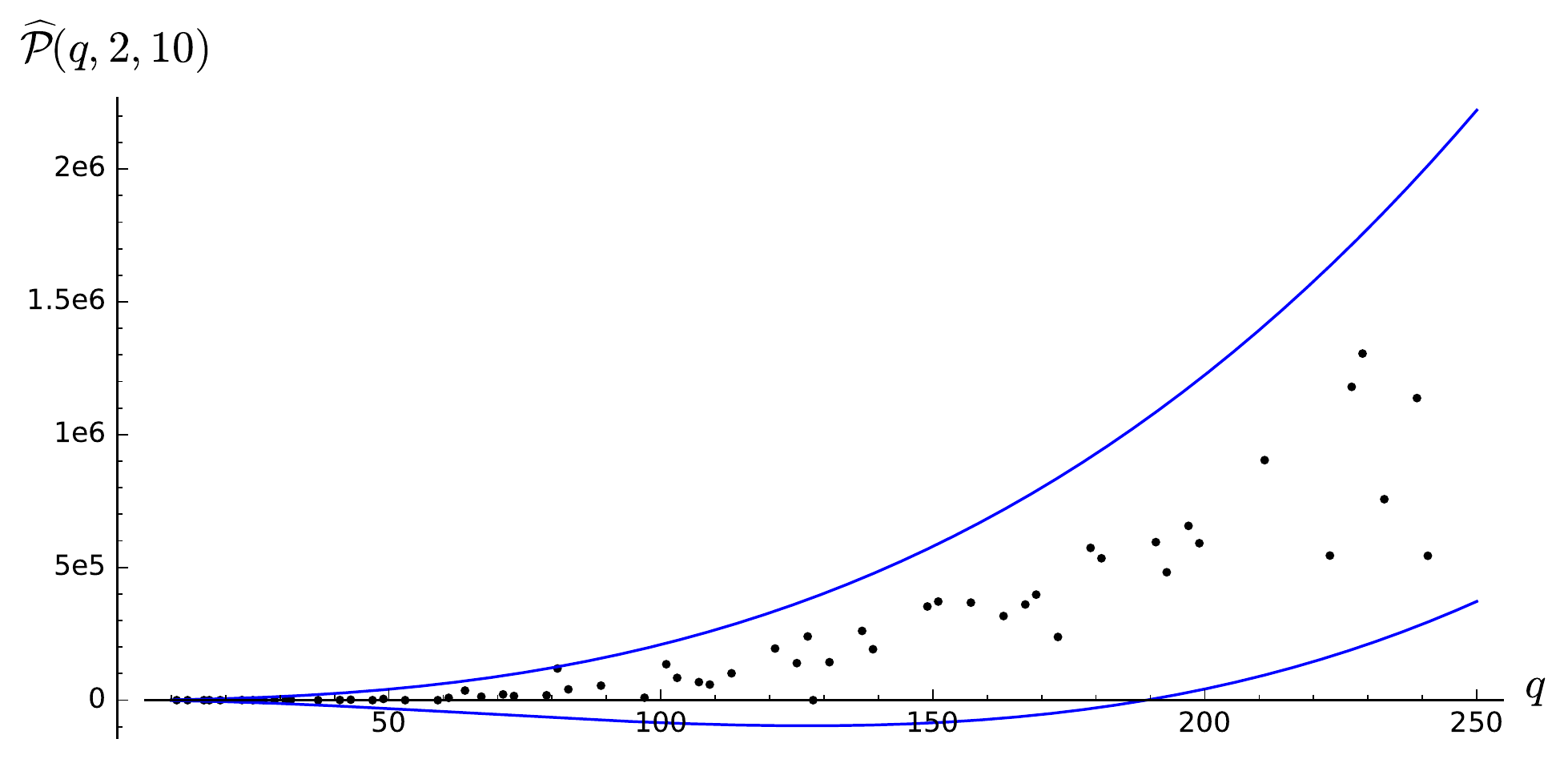}
\vspace{-15px}
\[
C_{10}\approx14.86
\]

\subsection{A plot of $C_k$}
\label{secondquestion}

To address the second question mentioned above, we plot the various values of $C_k$ in the hopes that they appear to be bounded.
This graph is below.\\

\includegraphics[width=3in,height=3in,viewport=0 0 270 270]{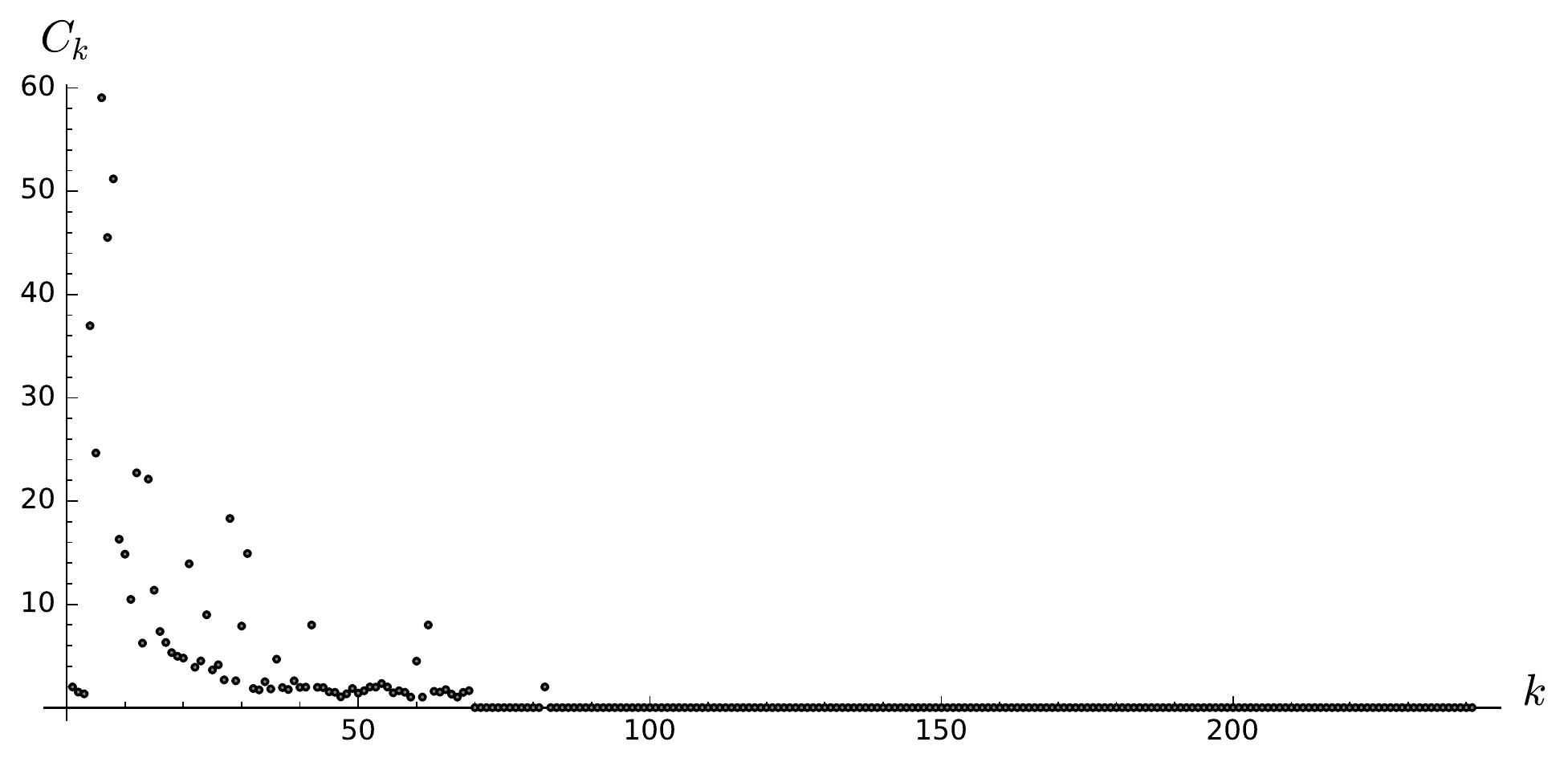}

We should point out that the small values of $C_k$ in the above graph are a result of the fact that in our data, we simply found no $k$-cycles at all for all $k>82$.
So from $k=82$ onward, the above graph is simply plotting
\[
\frac{241!}{(241-k)!\cdot k \cdot 241^{k-1}}.
\]
This begs two questions:
\begin{itemize}
\item
As cycles of larger length arise for larger values of $q$, will the size of $C_k$ increase?
\item
Conversely, if these cycles do not arise promptly, will this increase the size of $C_k$?
\end{itemize}
Of course, we cannot answer these questions, but note that for the particular value of $k=82$, the quadratic polynomials we tested yielded exactly $\num[group-separator={,}]{27722}$ $82$-cycles (all appearing when $q=167$), whereas for $k\in\lb70,\ldots,81\rb$, they yielded exactly zero.
That is, this is an example of a cycle of larger length arising without affecting the maximum of the $C_k$s.

As for the second question, the lack of $k$-cycles will not cause $C_k$ to rise above 60 as long as the first $k$-cycle appears in a graph for a finite field of size less than $60k$.
For example, the smallest $q$ for which 62-cycles appear is $q=128$ (which is well under $60\cdot62$).
The smallest cycle length that does not appear for $q\in\mathfrak{Q}$ is $k=43$; if a 43-cycle does not appear by the time $q=2579$, then $C_{43}$ will rise above 60.
It is unfortunately beyond our abilities to determine if a 43-cycle appears by this time.

\section{Rational functions}
\label{rationalfunctions}

In this section, we briefly mention the results for rational functions, which are analogous to those for polynomials.
For any prime power $q$ and $d\in\Z^{\geq0}$, let
\[
\mathcal{R}(q,d)
:=\frac{1}{\lv\lb f\in\P^1(\F_q)[x]\mid\deg{(f)}=d\rb\rv}\cdot
\sum_{\substack{f\in\P^1(\F_q)[x]\\\deg{(f)}=d}}
{\lv\lb\text{cycles in }\Gamma_f\rb\rv}.
\]
If $k\in\Z^{>0}$, we can define $\mathcal{R}(q,d,k)$ in exactly the same way as $\mathcal{P}(q,d,k)$.

To define our new families of random variables, for any prime power $q$ and $k\in\Z^{>0}$, let
\[
\tqk
=\lb T\subseteq\P^1(\mathbb{F}_q)\times\P^1(\mathbb{F}_q)\mid T\text{ is consistent and }\lv T\rv=k\rb,
\]
 and $V_{q,k}:\tqk\to\lb0,1\rb$ be the binary random variable that detects whether or not an element of $\tqk$ is a $k$-cycle.
If $d\in\Z^{\geq0}$, let $W_{q,d,k}:\tqk\to\Z^{\geq0}$ be the random variable defined by
\[
W_{q,d,k}(T)=\lv\lb f\in\F_q(x)\mid\deg{f}=d\text{ and }f\text{ satisfies }T\rb\rv.
\]

The rational function analogs of \hyperref[cqksize]{Remark~\ref*{cqksize}}, \hyperref[xexpected]{Remark~\ref*{xexpected}}, \hyperref[yexpected]{Proposition~\ref*{yexpected}} are proved as above, leading to the following conjecture, which again follows from the heuristic that the random variables $V_{q,k},W_{q,d,k}$ are ``uncorrelated enough''.



%
%
%
%
%
%
%
\begin{conjecture2}\label{conjecture2}
For any $k \in\Z^{>0}$ and any $d\in\Z^{\geq0}$,
\[
\mathcal{R}(q, d, k)
=\frac{(q+1)q \cdots (q-(k-2))}{k(q+1)^k} + O\lp\frac{1}{q}\rp,
\]
where the implied constant depends only on $d$.
In particular, $\mathcal{R}\lp q,d\rp=\mathcal{K}(q+1)+O(1)$.
\end{conjecture2}

\begin{ratheuristic} \label{ratheuristic} If $k \in \{1, \dots, q\}$, and $d \in \Z^{\geq 0}$, then 
\[ \E \lc V_{q,k} W_{q,d,k} \rc=\E \lc V_{q,k} \rc \E \lc W_{q,d,k} \rc +O\lp q^{2d-2k} \rp. \]
Here, the implied constant depends only on $d$.
\end{ratheuristic}

\begin{mainthm2} \label{ratmainthm2}
If \hyperref[ratheuristic]{Heuristic ~\ref*{ratheuristic}} is true, then \hyperref[conjecture2]{Conjecture~\ref*{conjecture2}} is true.
\end{mainthm2}

\begin{proof}
Similar to the proof of \hyperref[mainthm1]{Theorem ~\ref*{mainthm1}}.
\end{proof}
%
%
%

\section*{Acknowledgments}

The authors would like to thank Ian Dinwoodie, Rafe Jones, and Christopher Kramer for their help and advice.

\bibliography{graphheuristics}
\bibliographystyle{amsalpha}

\end{document}